\title{Hypocoercivity in Phi-entropy for the Linear Relaxation Boltzmann Equation on the Torus.}
\author{Josephine Evans}
\address{Department of Pure Mathematics and Mathematical Statistics\\
University of Cambridge\\
Wilberforce Road\\
Cambridge CB3 0WA, UK}
\email{jahe2@cam.ac.uk}
\thanks{The author was supported by the UK Engineering and Physical
Sciences Research Council (EPSRC) grant EP/H023348/1 for the
University of Cambridge Centre for Doctoral Training, the Cambridge
Centre for Analysis.}
\keywords{Convergence to equilibrium; Hypocoercivity; Linear Boltzmann Equation; $\phi$-entropy; Logarithmic Sobolev inequality; Beckner Inequality}
\newtheorem{thm}{Theorem}
\newtheorem{defn}{Definition}
\newtheorem{lemma}{Lemma}
\newtheorem*{remark}{Remark}
\begin{document}

\maketitle
\begin{abstract}
This paper studies convergence to equilibrium for the spatially inhomogeneous linear relaxation Boltzmann equation in Boltzmann entropy and related entropy functionals, the $p$-entropies. Villani proved in \cite{V09} entropic hypocoercivity for a class of PDEs in a H\"{o}rmander sum of squares form. It was an open question to prove such a result for an operator which does not share this form. We prove a closed entropy-entropy production inequality \`a la Villani which implies exponentially fast convergence to equilibrium for the linear Boltzmann equation with a quantitative rate. The key new idea appearing in our proof is the use of a total derivative of the entropy of a projection of our solution to compensate for an error term which appears when using non-linear entropies. We also extend the proofs for hypocoercivity for the linear relaxation Boltzmann to the case of $\Phi$-entropy functionals.
\end{abstract}
\tableofcontents
\section{Introduction} In this paper we constructively prove convergence to equilibrium for the linear relaxation Boltzmann equation on the torus in relative entropy. We also look at other entropy functionals, the $\Phi$-entropies specifically $p$-entropies. The equation is
\begin{align}  \label{eq:bigeqf} 
\partial_t f +v \cdot \nabla_x f = \lambda \tilde{\Pi}(f) - \lambda f,
\end{align} where $f=f(t,x,v) : \mathbb{R}_{+} \times \mathbb{T}^d \times \mathbb{R}^d \rightarrow \mathbb{R}$ and $\lambda$ is a positive constant. We always consider $f$ to be a probability density so it is positive and of mass one, this is well known to be preserved by the equation. It is straightforward to show that this equation is well posed in $L^1$. The operator $\tilde{\Pi}$ is defined by
\[ \tilde{\Pi}(f)  =: \left(\int_{\mathbb{R}^d} f(t,x,u) \mathrm{d}u\right) \mathcal{M}(v), \] \[ \mathcal{M}(v) := (2\pi)^{-d/2} \exp \left( -\frac{|v|^2}{2}\right). \]The equilibrium state of this equation is $\mu(x,v) = \mathcal{M}(v)$. We give two separate notations here to emphasize when we consider it as a function of $v$ alone or a function of $x$ and $v$. We will always work in terms of $h=f/\mu$ which satisfies,
\begin{align}\label{eq:bigeqh}
\partial_t h + v\cdot \nabla_x h = \lambda\Pi h - \lambda h, 
\end{align} here we define $\Pi$ by
\[ \Pi h = \int_{\mathbb{R}^d} h(t,x,u) \mathcal{M}(u) \mathrm{d}u. \] So the function $\Pi h$ does not depend on $v$.

We want to study the convergence to equilibrium for solutions to equation \eqref{eq:bigeqf} in relative entropy, $H$, and Fisher information, $I$, of $f$ to $\mu$. Studying the relative entropy has been an important way of showing convergence to equilibrium for kinetic equations since Boltzmann's $H$-theorem \cite{B64}. Fisher information was introduced into kinetic theory by McKean to study convergence to equilibrium for a caricature of the Boltzmann equation \cite{M66}. These quantities are defined in terms of $h = f/\mu$, and are
\begin{align*}
H(h) =& \int_{\mathbb{T}^d \times \mathbb{R}^d} h \log(h) \mathrm{d}\mu,\\
I(h) =& \int_{\mathbb{T}^d \times \mathbb{R}^d} \frac{|\nabla h|^2}{h} \mathrm{d}\mu.
\end{align*}

\subsection{Previous work}
Villani and Desvillettes demonstrated convergence to equilibrium in weighted $H^1$ for spatially inhomogeneous kinetic equations including the Boltzmann equation in \cite{DV01, DV05}, their techniques were also applied to the linear Boltzmann equation in \cite{CCG03} where they show convergence faster than any power of $t$. After this the theory of hypocoercivity was developed and the equation is shown to converge to equilibrium in weighted $L^2$, \cite{H07}, by H\'{e}rau in order to demonstrate the applicability of the tools used in \cite{HN04}. Convergence in weighted $H^1$ is also demonstrated in section 5.1 of \cite{NM06}, by Mouhot and Neumann as a consequence of a more general theorem. The techniques used in both these papers exploit commutator relations between the transport and collision part of the equation using the tools of hypocoercivity also see \cite{EH03, HN04, HN05} for hypoellipticity based approaches; \cite{ V09} for Villani's method based on these earlier works; \cite{DMS15} for work directly in weighted $L^2$ spaces developing methods similar to \cite{H07} to extend the work to a wider classe of operators and \cite{MM14} which extends these results to a wider class of function spaces. The paper \cite{AAC15}, shows convergence in weighted $L^2$ spaces with improved rates, and studies the convergence in relative entropy for models with discrete velocities. A linearized version of the non-linear equation in the multi-species case is studied in \cite{AAC18} and a similar problem for the Elipsoidal BGK model is considered in \cite{Y15}. 

In these references, convergence is shown for $h$ in either $L^2(\mu)$ or $H^1(\mu)$. These norms control relative entropy, $H(\mu)$. Therefore these results do imply exponential convergence of relative entropy. This fact is written explicitly in \cite{HN04}, Corollary 1.2, in this paper we give a different result with a close form estimate. This means we do not require the initial data, $h_0$, to be in $L^2(\mu)$, which would exclude $f_0$ having heavy tails.

 The convergence demonstrated in all these papers is of the form
\begin{equation} \label{eq:hypo} \mathcal{E}(f(t)| \mu ) \leq C e^{- \gamma t} \mathcal{E}( f(0) | \mu ), \end{equation} where $\mathcal{E}$ is a functional or norm and $C$ and $\gamma$ are explicit constants. If $C=1$ the equation would be coercive in this norm. When $C>1$, we use the terminology introduced in \cite{V09} and say that it is hypocoercive. 

We now show briefly why our equation is not coercive. Let $\mathcal{A}$ be some set of functions. First we not that if \eqref{eq:hypo} holds with $C=1$ for every initial data $f(0) \in \mathcal{A}$ then this is equivalent to a functional inequality. Lets define another functional by
\[ D(f_0| \mu) = - \frac{\mathrm{d}}{\mathrm{d}t}_{t=0} \mathcal{E}(f(t)|\mu). \] Then if \eqref{eq:hypo} holds with $C=1$ for every $f(0) \in \mathcal{A}$ this is equivalent to 
\begin{equation} \label{eq:diffiq} D(f)  \geq \gamma \mathcal{E}(f|\mu),\end{equation} for every $f \in \mathcal{A}$.
We can see that \eqref{eq:diffiq} implies \eqref{eq:hypo} with $C=1$ by using Gr\"{o}nwall's inequality. Conversely if \eqref{eq:hypo} holds with $C=1$ and $f(0)$ then differentiating this inequality at $t=0$ will allow us to recover \eqref{eq:diffiq}.
We can check that this last inequality does not hold for the functionals we consider when $f$ is in local equilibrium (i.e. of the form $\rho (x) \mathcal{M}(v)$). More precisely we can check that $D(\rho \mathcal{M}|\mu) =0$. 

\subsection{Entropic hypocoercivity}
Studying such equations in relative entropy was introduced by Villani in \cite{V09}. More recently entropic hypocoercivity and hypocoercivity in different $\Phi$ entropies have been studied for diffusion operators \cite{B13, BLMV14, M15, B16}. Whilst most hypocoercivity theory has been done in $L^2(\mu^{-1}), H^1(\mu^{-1})$ there are several motivations to try and push the theory in the context of relative entropy. 
\begin{itemize}
\item We can enlarge the space of initial data for which we can show exponentially fast convergence to equilibrium. If we show a result for $f \in L^2(\mu^{-1})$ then we are constrained to work with initial data in $L^2(\mu^{-1})$. This means that $f_0$ must decay very fast at infinity. However, if $\mu = \exp (-|v|^2/2 + U(x))$ then we have
\[ H_\mu(f) = \int f\log(f/\mu) \mathrm{d}x \mathrm{d}v = \int f \log(f) \mathrm{d}x \mathrm{d}v + \int f ( |v|^2/2 + U(x)) \mathrm{d}x \mathrm{d}v.  \] Similarly, for Fisher information we have
\[ I_\mu(f) \leq I(f) + \int f |\nabla( |v|^2/2 + U(x))|^2\mathrm{d}v \mathrm{d}x.  \] So these quantities will be finite provided we have some moment bounds (depending on $U(x)$) and finite entropy and Fisher information. This is true for many distributions which decay only polynomially at infinity.
\item If we want to eventually study non-linear equations then it is often the case that strong spaces like $L^2(\mu^{-1})$ will not be a natural space for the equation. For initial data which is neither small nor close to the Maxwellian there is no well posedness theory for the Boltzmann equation in Hilbert spaces weighted against the equilibrium This problem is solved in the context of the Boltzmann equation by combining linearised theory with enlarging the space of solutions \cite{GMM17} and Desvillettes-Villani results to show when the solution will enter the linearised regime.
\item The relative entropy and relative Fisher information functionals behave well with respect to the dimension of the phase space that the equation is set in.  More specifically, suppose that $F_N = f^{\otimes N}$ then we have
\[ H(F_N) = \int f^{\otimes N}(z) \sum_i \log (f(z_i)) \mathrm{d}z = \sum_i \int f(z_i) \log(f(z_i)) \mathrm{d}z_i = N H(f). \]
We can also show that if $\Pi_1(F_N)$ is its first marginal,  and the particles are indistinguishable then
\[ H(\Pi_1(F_N)) \leq \frac{1}{N} H(F_N). \] Therefore, if we know that for all $N$ that
\[ H(F_N(t)) \leq Ce^{-\lambda t} H(F_N(0)), \] then we have that
\[ H(\Pi_1(F_N(t))) \leq \frac{C}{N} e^{-\lambda t} H(F_N(0)). \] Furthermore if $F_N(0)$ is a tensor product or similar we will have
\[ H(\Pi_1 (F_N(t))) \leq C e^{-\lambda t} \] where $C$ does not depend on $N$. Therefore the rates of convergence to equilibrium are uniform in $N$. On the other hand for $L^2$ the distance $\|F_N\|_2$ behaves like $\|\Pi_1 F_N\|_2^N$. So if we try the same computation we get that 
\[ \| \Pi_1 F_N(t) \|_2 \leq C e^{-\lambda t /N}. \] This effect becomes particularly important if one wishes to study particle systems and derive convergence results which are uniform in the number of particles. Entropic hypocoercivity has been used in \cite{OL17} to show convergence to the limit equation for oscillator chains.
\end{itemize}


\subsection{Villani's method for operators in H\"{o}rmander form and the problem for the linear relaxation Boltzmann equation}

The main purpose of this work is to demonstrate that entropic hypocoercivity can be proved for an equation which is not in `$A^*A +B$' form where $A, B$ are first order differential operators. The key difference between the proofs given here and those of previous hypocoercivity results arises because we do not have a diffusion operator. In order to understand this it is useful to compare the linear relaxation Boltzmann equation with the kinetic Fokker-Planck equation on the torus.
\begin{equation}\label{eq:kfp} \partial_t f + v\cdot \nabla_x f = \nabla_v \cdot \left( \nabla_v f + v f \right). \end{equation} Here we put $x \in \mathbb{T}^d, v \in \mathbb{R}^d$ as with the linear relaxation Boltzmann equation. This equation also has the same equilibrium $\mu$. Therefore we can write an equation on $h=f/\mu$ in the same way
\begin{equation} \label{eq:kfph} \partial_t h + v \cdot \nabla_x h = \left( \nabla_v - v \right) \cdot \nabla_v h. \end{equation} We can look at the dissipation of $H(h)$ for both these equation. We have, 
\begin{align}
D_{kFP}(h) =& \int_{\mathbb{T}^d \times \mathbb{R}^d} \frac{|\nabla_v h|^2}{h} \mathrm{d}\mu,\\
D_{LRB}(h) =& \int (h-\Pi h) \log(h) \mathrm{d}\mu.
\end{align} Here $D_{kFP}$ is the dissipation of relative entropy for the kinetic Fokker-Planck equation, \eqref{eq:kfph}, and $D_{LRB}$ is the dissipation of relative entropy for the linear relaxation Boltzmann equation, \eqref{eq:bigeqh}. We can see that both these quantities will vanish when $h$ is a function only of $x$. This is the local equilibria mentioned above. Here, we also see a crucial difference. The regularizing effect of the Fokker-Planck operator means that $D_{kFP}$ is a Fisher Information type term in the sense that it is of order one in terms of derivatives, where as $D_{LRB}$ is an entropy type term in the sense that it is order zero in terms of derivatives.

The proofs in \cite{HN04, V09, NM06} use crucially the way that the free transport operator interacts with mixed $x$ and $v$ derivatives of the solution. In the context of relative entropy and Fisher information we can state this precisely. If $h(t,x,v)$ is a solution to the free transport equation
\[ \partial_t h + v \cdot \nabla_x h =0,\] then we have that
\begin{align*}
\frac{\mathrm{d}}{\mathrm{d}t} \int_{\mathbb{T}^d \times \mathbb{R}^d} \frac{\nabla_x h \cdot \nabla_v h}{h} \mathrm{d} \mu = - \int_{\mathbb{T}^d \times \mathbb{R}^d} \frac{|\nabla_x h|^2}{h} \mathrm{d}\mu.
\end{align*}

In the case of the kinetic Fokker Planck equation Villani uses a functional which involves both the terms
\[ \int_{\mathbb{T}^d \times \mathbb{R}^d} h \log(h) \mathrm{d}\mu, \quad \mbox{and} \quad \int_{\mathbb{T}^d \times \mathbb{R}^d} \frac{\nabla_x h \cdot \nabla_v h}{h} \mathrm{d}\mu. \] These will then produce the terms
\[ \int_{\mathbb{T}^d \times \mathbb{R}^d} \frac{|\nabla_x h|^2}{h} \mathrm{d}\mu, \quad \mbox{and} \quad \int_{\mathbb{T}^d \times \mathbb{R}^d} \frac{|\nabla_v h|^2}{h} \mathrm{d}\mu,\] in their dissipation. In fact, Villani uses a functional of the form
\[ \mathscr{F}(h) = H(h) + \int_{\mathbb{T}^d \times \mathbb{R}^d} \frac{ a |\nabla_x h|^2 + 2b \nabla_x \cdot \nabla_v h + c |\nabla_v h|^2}{h} \mathrm{d}\mu. \] The other Fisher information type terms are required to make $\mathscr{F}$ a positive functional. There will be a lot of error terms in the dissipation as well as the useful terms appearing above. We can differentiate $\mathscr{F}(h)$ along the flow of the kinetic Fokker-Planck equation \eqref{eq:kfph}. Here we give the calculations briefly and refer to \cite{V09} for more detail. If we choose $b^2 <ac$ one can verify that
\begin{align*}
\frac{\mathrm{d}}{\mathrm{d}t} \mathscr{F}(h) \leq & - \left(1 +2c\right)\int_{\mathbb{T}^d \times \mathbb{R}^d}\frac{|\nabla_v h|^2}{h} \mathrm{d}\mu \\
& - 2b \int_{\mathbb{T}^d \times \mathbb{R}^d} \frac{|\nabla_x h|^2}{h} \mathrm{d}\mu \\
& - \left(2c +2b \right) \int_{\mathbb{T}^d \times \mathbb{R}^d} \frac{\nabla_x h \cdot \nabla_v h}{h} \mathrm{d}\mu.
\end{align*} Now if $a,b,c$ are sufficiently small we can split up the last term by the Cauchy-Schwartz inequality. We want to control this term by a large amount of the Fisher information terms with gradients in $v$ and a small amount of Fisher information terms with gradients in $x$. For appropriately chosen constants $a,b,c,$ this will give us that
\begin{align} \label{eq:dissiq}
\frac{\mathrm{d}}{\mathrm{d}t} \mathscr{F}(h) \leq - b  \int_{\mathbb{T}^d \times \mathbb{R}^d} \frac{|\nabla_x h|^2 + |\nabla_v h|^2}{h} \mathrm{d}\mu.
\end{align} This strategy for choosing $a,b$ and $c$ relies on the fact that the dissipation of entropy for the kinetic Fokker-Planck term is a Fisher information type term and can be used to control other Fisher information type terms appearing as errors. Now the goal is to compare the dissipation of $\mathscr{F}$ to $\mathscr{F}$ via a functional inequality. For this we need another tool, the logarithmic-Sobolev inequality. 
\begin{defn}\label{def:LS}
A measure $\mu$ in the space of positive measures on a state space $\Omega$ satisfies a logarithmic Sobolev inequality if for all $h$ we have that
\[ \int_\Omega \Big( h(z) \log(h(z)) -h(z) +1 \Big) \mathrm{d}\mu \leq C_{LS} \int_\Omega \frac{|\nabla_z h(z)|^2}{h(z)} \mathrm{d}\mu. \]
\end{defn}

We know that the equilibrium state for the Fokker-Planck equation on the torus satisfies a logarithmic Sobolev inequality, see for example \cite{G75}. Therefore we can substitute \ref{def:LS} into \eqref{eq:dissiq} to get
\begin{align}
\frac{\mathrm{d}}{\mathrm{d}t} \mathscr{F}(h) \leq & - \frac{b}{2}  \int_{\mathbb{T}^d \times \mathbb{R}^d} \frac{|\nabla_x h|^2 + |\nabla_v h|^2}{h} \mathrm{d}\mu - \frac{b}{2C_{LS}} \int_{\mathbb{T}^d \times \mathbb{R}^d} h \log(h) \mathrm{d}\mu\\
\leq & -C \mathscr{F}(h).
\end{align} We can then conclude by Gr\"{o}nwall's inequality that $\mathscr{F}$ is decreasing exponentially fast. This implies that both the relative entropy and relative Fisher information will decrease.

This proof relies on the fact that the dissipation of relative entropy for the kinetic Fokker-Planck is a Fisher Information type term. In the case of the linear relaxation Boltzmann equation this is no longer the case. It is still possible to generate 
\[ \int_{\mathbb{T}^d \times \mathbb{R}^d} \frac{|\nabla_v h|^2}{h} \mathrm{d}\mu, \] we get this from because for $h$ a solution to \eqref{eq:bigeqh} then
\begin{align*} \frac{\mathrm{d}}{\mathrm{d}t} \int_{\mathbb{T}^d \times \mathbb{R}^d} \frac{|\nabla_v h|^2}{h} \mathrm{d}\mu =& -\int_{\mathbb{T}^d \times \mathbb{R}^d} \frac{\nabla_xh \cdot \nabla_v h}{h} \mathrm{d}\mu \\
& - \int_{\mathbb{T}^d \times \mathbb{R}^d} \frac{|\nabla_v h|^2}{h} \left( 1+ \frac{\Pi h}{h} \right) \mathrm{d}\mu. \end{align*} However, because we also generate a term like
\[ \int_{\mathbb{T}^d \times \mathbb{R}^d} \frac{\nabla_x h \cdot \nabla_v h}{h} \mathrm{d}\mu.   \] This makes it impossible to close a Gr\"{o}nwall inequality purely on components of Fisher Information. We must expect this to be true since if we could close a Gr\"{o}nwall inequality on a functional without using a logarithmic Sobolev inequality or similar then exactly the same calculations would work for the equation where $x$ is in the whole space with no confining potential. In this situation we would not see exponential convergence of the relative entropy to zero. Therefore our strategy is to find another entropy type term whose dissipation is a Fisher Information type term. We are motivated by the fact that $\Pi h$ the macroscopic density will gain some regularity due to the averaging lemma, which says that the free transport will generate $H^{1/2}$ regularity for $\Pi h$, see for example \cite{GLP88}. Although we do not prove a regularizing result on $\Pi h$ and are results are not directly related we still find it useful to look at entropies involving $\Pi h$. In fact we will prove in section \ref{proofs} that for $h$ a solution to \eqref{eq:bigeqh}
\begin{equation}
\frac{\mathrm{d}}{\mathrm{d}t} \int_{\mathbb{T}^d \times \mathbb{R}^d} \Pi h \log \left( \Pi h \right) \mathrm{d}\mu = \int_{\mathbb{T}^d \times \mathbb{R}^d} \frac{ \Pi \left( \nabla_v h \right) \cdot \Pi \left( \nabla_x h \right)}{ \Pi h} \mathrm{d}\mu.
\end{equation} 
This is the key new idea used in our proof. The full strategy is explained in section \ref{proofs}.

 A similar problem occurs when working in $H^1$ norms. This situation is studied in \cite{NM06}. Here they work with $g=f/\sqrt{\mathcal{M}}$ instead of working with $h = f/\mathcal{M}$, this is only possible in the Hilbert space setting. In this situation we can write an equation on $g$

\begin{equation} \label{eq:bigeqg} \partial_t g + v \cdot \nabla_x g = \left( \int_{\mathbb{T}^d \times \mathbb{R}^d} g(t,x,u) \sqrt{\mathcal{M}(u)} \mathrm{d}u \sqrt{\mathcal{M}(v)}\right) - g(t,x,v) := \bar{\Pi} g - g. \end{equation} 

Working in terms of $g$ rather than $h$ means sacrificing simplicity in bounding the dissipation of $\| \nabla_v h \|^2_{L^2(\mu)}$ relative to $\|\nabla_v g\|^2_{L^2}$ for simplicity in controlling the mixed term. Precisely we have that for every $\delta >0$
\[ \frac{\mathrm{d}}{\mathrm{d}t} \| \nabla_v g\|^2_2 \leq -2 \langle \nabla_v g, \nabla_x g \rangle + \delta \| \nabla_v g \|^2_2 + C(\delta) \| g\|^2_2  \] and for every $\eta >0$
\[ \frac{\mathrm{d}}{\mathrm{d}t} \langle \nabla_v g, \nabla_x g \rangle \leq - \|\nabla_x g \|_2^2 + \eta \|\nabla_v g \|^2_2 + C(\eta) \| \nabla_x (\bar{\Pi} g - g) \|_2^2.  \] This means we can control mixed derivatives appearing in the dissipation of our functional up to producing a large amount of $\|g\|_2^2$ and this can be controlled by adding $\|g\|^2_2$ to the original functional. It is currently unclear whether it is possible to make a similar strategy work for relative entropy and Fisher information.



\subsection{Results}

 In fact we prove do not work only in relative entropy. We instead study general $\Phi$-entropies and $\Phi$-Fisher informations defined respectively by
\begin{align*}
H^\Phi =& \int_{\mathbb{R}^d \times \mathbb{T}^d} \Phi (h) \mathrm{d}\mu\\
I^\Phi =& \int_{\mathbb{R}^d \times \mathbb{T}^d} \Phi''(h)|\nabla h|^2 \mathrm{d}\mu.
\end{align*} We work with $\Phi$ a positive function such that $\Phi(1) = 0, \Phi''(t) > 0 \hspace{5pt} \forall t$, $1/\Phi''(t)$ a concave function and $\Phi(t)\Phi''(t) > 2 \Phi'(t)^2 \hspace{5pt} \forall t.$
\begin{defn}
We say that a measure $\mu$ satisfies a $\Phi$-logarithmic Sobolev inequality if there exists a constant $C>0$ such that for all $h$ with $\int f \mathrm{d}\mu = 1$ we have
\[ H^\Phi(h) \leq C I^\Phi(h). \]
\end{defn}

\begin{remark}
The conditions of $\Phi$ are satisfied when $\Phi$ is one of 
\[ \Phi_1(t) := t \log (t) -t +1 \] and
\[ \Phi_p (t) := \frac{1}{p-1}\big(t^p -1 - p(t-1)\big), \] where $p \in (1,2]$.
These quantities interpolate between the quadratic functional case $p=2$ which is the $L^2$ norm, and the Boltzmann entropy case, $p \sim 1$. They are used in \cite{ AMTU01, BG10} to study Fokker-Planck equations and convergence to equilibrium. Here we have inequalities due to Beckner in \cite{B89} which play the same role as the logarithmic Sobolev inequality does in showing hypocoercivity in Boltzmann entropy. They are of the form
\[ \int_{\mathbb{T}^d \times \mathbb{R}^d} \frac{h^p -h}{p(p-1)} \mathrm{d}\mu \leq C \int_{\mathbb{T}^d \times \mathbb{R}^d}  h^{p-2} |\nabla_{x,v} h|^2 \mathrm{d}\mu. \] These can be shown by interpolating between Poincar\'{e} and logarithmic Sobolev inequality \cite{ABD07}.
Beckner Inequalities are often stated in the form
\[ \int u^2 \mathrm{d}\mu - \left(\int u^p \mathrm{d}\mu \right)^{2/p} \leq (2-p) C \int |\nabla u|^2 \mathrm{d}\mu. \] It is straightforward to show that this equivalent to the form given above. (Write $q=2/p, h=u^p$ and assume by homogeneity that $\int h = 1$.)
\end{remark}

\begin{thm}\label{phitheorem} Let $\Phi$ satisfy the conditions in lemma \ref{phiconvex} and also let $\Phi$ be such that the uniform measure on the torus satisfies a $\Phi$-Sobolev inequality, $\Phi(t)\Phi''(t) > 2 \Phi'(t)^2 \hspace{5pt} \forall t$ and $1/\Phi''$ is a concave function.
If $f$ is a solution to (\ref{eq:bigeqh}) with initial data $h_0$ such that
\[ \int_{\mathbb{R}^d \times \mathbb{T}^d} \Phi''(h_0)|\nabla_{x,v} h_0|^2 \mathrm{d}\mu < \infty, \qquad  f_0 \in W^{1,1}(\mu), \] then there exist constants $\Lambda$ and $A$ depending on $\lambda$ and the constant in the $\Phi$-Sobolev inequality, such that
\[ I_\mu^\Phi(h_t) + H_\mu^\Phi(\Pi h_t) \leq A\exp \left( - \Lambda t\right) \Big(  I_\mu^\Phi(h_0) +  H_\mu^\Phi(\Pi h_0) \Big).\] This implies that if the equilibrium measure satisfies a $\Phi$-Sobolev inequality then for some $\gamma$,
\[ H(h_t) \leq \gamma \exp \left( - \Lambda t\right) I(h_0) .\]
We can take
\[ \Lambda = \min \left\{1, \frac{C}{4(1+\lambda)} \right\}\min\{2,\lambda/2\}  \] and
\[A = 4 \max \{2(1+1/\lambda)^2, (1+ \lambda)  \}.  \] Here $C$ is the constant in the $\Phi$-Sobolev inequality for the uniform measure on the torus.
\end{thm}

\subsection{Perspectives}
This work raises two natural questions. The first is whether a similar strategy can be used to show convergence to equilibrium for the linear relaxation Boltzmann equation when $x$ is in the whole space and the transport operator also involves a confining potential term. For the kinetic Fokker-Planck equation Villani shows convergence in $H^1$ and Boltzmann entropy in the first section of \cite{V09}. In \cite{M15} Monmarch\'{e} proves a general theorem which shows that hypocoercivity holds for the kinetic Fokker-Planck equation with confining potential in a class of $\Phi$ entropies which include the $p$-entropies. The situation is different for the linear relaxation Boltzmann equation. It is shown to be hypocoercive in $L^2$ in \cite{H07, DMS15}. In these works they use the inverse of an elliptic operator in order to create norms where you can compare the effect of the transport to the effect of the collisions. Emulating this strategy would be difficult in $\Phi$-entropies. To show hypocoercivity for the linear relaxation Boltzmann equation with a confining potential in $\Phi$-entropies would involve a very different strategy to our proofs in this equation. However, in the near to quadratic case it is possible to exploit additional cancellations happening in the operator to show convergence as is shown in \cite{M17} using calculations based on the original version of this paper.

The second natural question is whether this strategy could be extended to different collision operators which are also not regularising. For example the more complex scattering operators of the form
\[ \mathcal{C}(h)(x,v) = \int_{\mathbb{R}^d} \Big(k(u,v)h(u) - k(v,u)h(v)  \Big) \mathcal{M}(u) \mathrm{d}u, \] where
\[ \int_{\mathbb{R}^d} \Big( k(u,v)-k(v,u) \Big) \mathcal{M}(u) \mathrm{d}u = 0. \] Here our main goal would be the linear Boltzmann operator where
\[ \mathcal{C}(h)(x,v) = Q(h\mathcal{M}, \mathcal{M})\mathcal{M}(v)^{-1}. \] Where here $Q$ is the Boltzmann collision operator
\[ Q(f,g) = \int_{\mathbb{R}^d} \int_{\mathbb{S}^{d-1}} \Big(f(v')g(v_*') - f(v) g(v_*) \Big)\mathrm{d}\sigma\mathrm{d}v_*, \]
\[ v' = \frac{v+v_*}{2} +\frac{|v-v_*|}{2} \sigma, \quad v_*' = \frac{v+v_*}{2} - \frac{|v-v_*|}{2}\sigma. \] 

Exactly the same proof will work for a scattering operator which satisfies that for any positive definite constant matrix $S$ we have
\[\left( \frac{\mathrm{d}}{\mathrm{d}t} \right)_{\mathcal{C}} \int_{\mathbb{T}^d \times \mathbb{R}^d} \frac{\nabla_{x,v} h \cdot S \nabla_{x,v} h}{h} \mathrm{d}\mu \leq \int_{\mathbb{T}^d \times \mathbb{R}^d} \frac{\nabla_{x,v} \Pi h \cdot S \nabla_{x,v} \Pi h}{\Pi h} \mathrm{d}\mu  -\int_{\mathbb{T}^d \times \mathbb{R}^d} \frac{\nabla_{x,v} h \cdot S \nabla_{x,v} h}{h} \mathrm{d}\mu. \] Here $\Pi$ is still the projection onto the space of functions depending only on $x$. Unfortunately we are currently unable to do this for collision operators which are not straightforwardly comparable to the linear relaxation Boltzmann collision operator.

\subsubsection*{Acknowledgements}
I would like to thank Cl\'{e}ment Mouhot for pointing me towards this problem, suggesting I tried to emulate the techniques in \cite{NM06} and suggesting I look at other $\Phi$-entropies. I would also like to thank Pierre Monmarch\'{e} for many useful comments on the style, notation and references in an earlier draft of this paper. I also had several useful discussions with Tom Holding in the very early stages of this paper about possible forms for the derivatives of the $X$ part of the Fisher information.

\section{Proofs for General $\Phi$-entropy} \label{proofs}

Throughout the main parts of this chapter we work with an $h$ which is bounded above and below by constants and has bounded derivatives of all orders. In this set of possible $h$, all the integration by parts and differentiating through the integral are justified. In the appendix we show that these properties are propagated by the equation and that we can extend the result to a wider set using a density argument.

First we prove our entropies are well behaved. Lets define the functional
\[ J^\Phi_\mu(h) = \int_{\mathbb{R}^d \times \mathbb{T}^d} \Phi''(h)\Big(a |\nabla_x h|^2 + 2b \nabla_x h \cdot \nabla_v h + c |\nabla_v h|^2\Big) \mathrm{d}\mu. \]
\begin{lemma} \label{phiconvex}
Let $\Phi$ satisfy $\forall t > 0$:
\begin{itemize}
\item $\Phi(t) \geq 0$
\item $\Phi''(t) \geq 0$
\item $\Phi''(t) \Phi^{(4)}(t) > 2\Phi^{(3)}(t)^2 $
\end{itemize} Then if $b^2 \leq ac$ then $J$ is a convex functional.
\end{lemma}
\begin{proof}
Since $b^2 < ab$ we can write $J$ as the sum of functionals like
\[ \tilde{J}(h) = \int_{\mathbb{R}^d\times \mathbb{T}^d}\Phi''(h)|\alpha \nabla_x h + \beta \nabla_v h|^2\mathrm{d}\mu. \] Then the if the function
\[ \phi(\mathbf{x},y) = \Phi''(y) |\mathbf{x}|^2 \] the whole functional will be convex. This is because if $\phi$ is convex then
\begin{align*} \tilde{J}\Big(th+(1-t)g\Big) =& \int_{\mathbb{R}^d \times \mathbb{T}^d}\phi \Big(t(\alpha \nabla_x h + \beta \nabla_v h) + (1-t)(\alpha \nabla_x g + \beta\nabla_v g), th + (1-t) g\Big) \mathrm{d}\mu \\
\leq & \int_{\mathbb{R}^d \times \mathbb{T}^d}\Big( t\phi(\alpha\nabla_x h + \beta \nabla_v h, h) + (1-t)\phi(\alpha \nabla_x g + \beta \nabla_v g, g)\Big)\mathrm{d}\mu\\
=& t\tilde{J}(h) + (1-t)\tilde{J}(g).\end{align*} So we have reduced to showing that $\phi$ is convex. The function $\phi$ is the sum of functions $\tilde{\phi} = \Phi''(y)x^2$ where now $x$ is one dimensional. So we only need to show that these are convex. The Hessian of $\tilde{\phi}$ is
\[ \left( \begin{array}{c c} 2 \Phi''(y) & 2 x \Phi^{(3)}(y) \\ 2 x \Phi^{(3)}(y) & x^2 \Phi^{(4)}(y) \end{array}\right).\] This has positive trace as both diagonal terms are positive by our assumptions. It also has determinant $ 2x^2\Phi''(x) \Phi^{(4)}(x) - 4 x^2 \Phi^{(3)}(x)^2$ which is again positive due to he assumptions we made on $\Phi$ therefore the Hessian is positive definite so $\tilde{\phi}$ is convex. 
\end{proof}

We now outline our strategy for the proof. Our goal is to get constructive rates of convergence to equilibrium by closing a Gr\"{o}nwall estimate on a functional that we construct. This functional is composed from the components of Fisher information and an entropy term. In order to explain the strategy compactly we introduce the components of Fisher information.
\begin{align*}
I^X :=& I^X(h) =  \int_{\mathbb{R}^d \times \mathbb{T}^d} \Phi''(h)|\nabla_x h|^2 \mathrm{d} \mu ,\\
I^V :=& I^V(h) =  \int_{\mathbb{R}^d \times \mathbb{T}^d} \Phi''(h)|\nabla_v h|^2 \mathrm{d} \mu ,\\
I^M :=& I^M(h) =  \int_{\mathbb{R}^d \times \mathbb{T}^d} \Phi''(h)\nabla_x h \cdot \nabla_v h \mathrm{d} \mu. \\
\end{align*} We note here that $I^M$ does not have a sign. We also introduce a projected entropy which we use in our functional,
\[  H_{\Pi}(h) = \int_{\mathbb{T}^d} \Phi(\Pi h) \mathrm{d}x. \] We have another term which only appears in the intermediate steps of the proof,
\begin{align*}
I^{\Pi X} :=&  \int_{\mathbb{R}^d \times \mathbb{T}^d} \Phi''(\Pi h)|\nabla_x(\Pi h)|^2  \mathrm{d}\mu,
\end{align*} We prove later in this section that $I^X - I^{\Pi X} \geq 0$. 

By differentiating along the flow of the equation we show something close to the inequalities
\begin{align}
\frac{\mathrm{d}}{\mathrm{d}t} I^X \leq & - \lambda \left( I^X- I^{\Pi X} \right), \label{IX}\\
\frac{\mathrm{d}}{\mathrm{d}t} I^M \leq &-I^X - \lambda I^M , \label{IM} \\
\frac{\mathrm{d}}{\mathrm{d}t} I^V \leq  &-2I^M - \lambda I^V. \label{IV}
\end{align} Actually there are extra elements appearing which would cancel out when these terms are combined into the type of functional we look at so these inequalities are not quite true. In fact we prove a global inequality on a functional like $J$ defined bellow but it is clearer to separate the elements here.
We begin by constructing a functional of the form 
\[ J = a I^X + 2b I^M + c I^V, \] with $a c - b^2 > 0.$ This inequality means that $J$ is equivalent to the Fisher information $I$.

 We now give a strategy for choosing $a,b,c$. We need that $b$ is non-zero since inequality \ref{IM} provides the negative $I^X$ which we want in the derivative. The most natural next step would be to use the Cauchy-Schwartz inequality to control $I^M$ by $I^X$, and $I^V$. However, we can check that the quantity of $I^M$ is too large for this to be possible. We need to utilise inequality \eqref{IX}. We do this by showing that
\begin{align} - I^M \leq \frac{\eta}{2}I^V + \frac{1}{2 \eta} \left( I^X - I^{\Pi X} \right) - \frac{\mathrm{d}}{\mathrm{d}t} H_\Pi. \label{H}\end{align} This is the key new element in our proof.

By adding a quantity of $H_\Pi$ to the functional and using inequality \eqref{H}, we can now control $I^M$ by $I^V$ and $I^X - I^{\Pi X}$. Since the inequality \eqref{IX} doesn't produce bad terms we are free to add as much $I^X$ to the functional as we need. Therefore, by adding a large amount of $H_\Pi$ and $I^X$ to our functional we can cancel out the positive $I^X - I^{\Pi X}$. Therefore we can make $\eta$ small. This means the sum of the positive $I^V$ from controlling $I^M$ and the negative $I^V$ from inequality \eqref{IV} will sum to a negative amount of $I^V$. We recall that we also have some negative $I^X$ for inequality \eqref{IM}. So we have,
\[ \frac{\mathrm{d}}{\mathrm{d}t} (J + A_4 H_\Pi) \leq - C (I^X + I^V). \] We then use the equivalence between $J$ and $I$ and the logarithmic Sobolev inequality to get
\[ \frac{\mathrm{d}}{\mathrm{d}t}( J+ A_4 H_\Pi) \leq - C(J+ A_4 H_\Pi). \] So we can close a Gronwall estimate and then use the equivalence between $J$ and $I$ again to translate this to an inequality on $I$.

In order to prove our theorem we would like to study how a functional like $J$ behaves under the action of the collision part of the operator. We write $L= \lambda (\Pi - I)$ and $T= -v\cdot \nabla_x$ and write $(\mathrm{d}/\mathrm{d}t)_O$ to write the derivative along the flow of the operator $L$. We have that
\begin{lemma} \label{phiflowL}
We can differentiate $J$ along the flow of $L$ to get that
\begin{align*}\left( \frac{\mathrm{d}}{\mathrm{d}t} \right)_L J_\mu^\Phi(h) \leq & a \left(\int_{\mathbb{R}^d\times \mathbb{T}^d} \Phi''(\Pi h) |\nabla_x \Pi h|^2 \mathrm{d}\mu- \int_{\mathbb{R}^d \times \mathbb{T}^d} \Phi'(h) |\nabla_x h|^2 \mathrm{d}\mu \right)\\
&- 2b \int_{\mathbb{R}^d \times \mathbb{T}^d} \Phi''(h) \nabla_x h \cdot \nabla_v h \mathrm{d}\mu - c \int_{\mathbb{R}^d \times \mathbb{T}^d}\Phi''(h)|\nabla_v h|^2 \mathrm{d}\mu.  \end{align*}
\end{lemma}
\begin{proof}
As $J_\mu^\Phi$ is convex we can see by Taylor expanding that
\begin{align*}
J_\mu^\Phi(e^{Ls}h(t)) = & J_\mu^\Phi\left(h(t)+ \lambda s (\Pi - I) h(t) + o(s)\right)\\
\leq & (1-\lambda s)J_\mu^\Phi\left(h(t)+o(s) \right) + \lambda s J_\mu^\Phi(\Pi h(t)).
\end{align*} Now we calculate that
\begin{align*}
J_\mu^\Phi(\Pi h) =& \int_{\mathbb{R}^d \times \mathbb{T}^d}\Phi''(\Pi h)\Big(a|\nabla_x \Pi h|^2 + 2b \nabla_x \Pi h \cdot \nabla_v \Pi h + c |\nabla_v \Pi h|^2 \Big)\mathrm{d}\mu\\
= & a\int_{\mathbb{R}^d \times \mathbb{T}^d} \Phi''(\Pi h)|\nabla_x \Pi h|^2 \mathrm{d}\mu.
\end{align*}
This means that 
\begin{align*}
J_\mu^\Phi(e^{sL}h(t)) - J_\mu^\Phi(h(t)) \leq & \lambda s a\left(\int_{\mathbb{R}^d \times \mathbb{T}^d}\Phi''(\Pi h)|\nabla_x \Pi h|^2 \mathrm{d} \mu - \int_{\mathbb{R}^d \times \mathbb{T}^d} \Phi''(h) |\nabla_x h|^2 \mathrm{d}\mu \right)\\
&- \lambda s b \int_{\mathbb{R}^d \times \mathbb{T}^d} \Phi''(h) \nabla_x  h \cdot \nabla_v h \mathrm{d} \mu \\ &- \lambda s c \int_{\mathbb{R}^d \times \mathbb{T}^d} \Phi''(h) |\nabla_v h|^2 \mathrm{d}\mu \\
& + J_\mu^\Phi(h(t) + o(s)) - J_\mu^\Phi (h(t)).
\end{align*} Dividing by $s$ and taking the limit as $s \rightarrow 0$ gives the result.
\end{proof}
We now need to look at how $J$ behaves under the flow of $T$.
\begin{lemma}\label{phiflowT}
We can differentiate $J$ along the flow of $T$ to get that
\begin{align*}
\left(\frac{\mathrm{d}}{\mathrm{d}t}  \right)_T J_\mu^\Phi(h) = & -2b \int_{\mathbb{R}^d \times \mathbb{T}^d}\Phi''(h) |\nabla_x h|^2\mathrm{d}\mu -2c\int_{\mathbb{R}^d \times \mathbb{T}^d}\Phi''(h) \nabla_x h \cdot \nabla_v h \mathrm{d}\mu.
\end{align*}
\end{lemma}
\begin{proof}
This is just a simple application of the chain rule. We have
\begin{align*}
\left(\frac{\mathrm{d}}{\mathrm{d}t}  \right)_TJ_\mu^\Phi(h) = & -  a \int_{\mathbb{R}^d \times \mathbb{T}^d}\Phi'''(h)(v \cdot \nabla_x h)|\nabla_x h|^2 \mathrm{d}\mu -2 a \int_{\mathbb{R}^d \times \mathbb{T}^d}\Phi''(h) \nabla_x(v \cdot \nabla_x h)\cdot \nabla_x h \mathrm{d}\mu \\
& -2b \int_{\mathbb{R}^d \times \mathbb{T}^d} \Phi'''(h)(v \cdot \nabla_x h)\nabla_x h\cdot \nabla_v h \mathrm{d}\mu - 2b \int_{\mathbb{R}^d \times \mathbb{T}^d} \Phi''(h) \nabla_x(v \cdot \nabla_x h) \cdot \nabla_v h \mathrm{d}\mu\\
& - 2b \int_{\mathbb{R}^d \times \mathbb{T}^d} \Phi''(h) \nabla_x h \cdot \nabla_v(v \cdot \nabla_x h) \mathrm{d}\mu - c \int_{\mathbb{R}^d \times \mathbb{T}^d} \Phi'''(h)(v \cdot \nabla_x h) |\nabla_v h|^2 \mathrm{d}\mu \\
& - 2c \int_{\mathbb{R}^d \times \mathbb{T}^d} \Phi''(h) \nabla_v (v \cdot \nabla_x h) \cdot \nabla_v h \mathrm{d}\mu\\
= & \int_{\mathbb{R}^d \times \mathbb{T}^d} v \cdot \nabla_x \left(\Phi''(h)(a |\nabla_x h|^2 + 2b \nabla_x h \cdot \nabla_v h + c |\nabla_v h|^2  \right) \mathrm{d}\mu \\
& - 2b \int_{\mathbb{R}^d \times \mathbb{T}^d} \Phi''(h) |\nabla_x h|^2 \mathrm{d}\mu - 2c \int_{\mathbb{R}^d \times \mathbb{T}^d} \Phi''(h) \nabla_x h \cdot \nabla_v h \mathrm{d}\mu\\
=& -2b\int_{\mathbb{R}^d \times \mathbb{T}^d}\Phi''(h) |\nabla_x h|^2 \mathrm{d}\mu - 2c \int_{\mathbb{R}^d \times \mathbb{T}^d} \Phi''(h) \nabla_x h \cdot \nabla_v h \mathrm{d}\mu.
\end{align*}
\end{proof}
Now we need to show our helpful lemma relating projected entropy to the mixed term. This result relates the quantities involving only $\Pi h$ to quantities coming from the full Fisher information. For this we define the local average speed $U(x)$, of a solution to (\ref{eq:bigeqf}) by
\[ U(x) := \int_{\mathbb{R}^d} v h(v,x) \mathcal{M}(v) \mathrm{d}v = \int_{\mathbb{R}^d} v f(v,x) \mathrm{d}v.  \]
\begin{lemma}\label{Jensen}
Suppose that the uniform measure on the torus satisfies a $\Phi$-Sobolov inequality. Then for any $h$ we have that
\[ I^{\Pi X}(h) = \int_{\mathbb{R}^d \times \mathbb{T}^d} \Phi''(\Pi h)|\nabla_x \Pi h |^2 \mathrm{d} \mu \leq \int_{\mathbb{R}^d \times \mathbb{T}^d} \Phi''(h)|\nabla_xh|^2 \mathrm{d} \mu. \] This implies that for all $h$ there exists a constant $C$ such that
\[ H_{\Pi}(h) = \int_{\mathbb{T}^d} \Phi(\Pi h) \mathrm{d}x \leq C \int_{\mathbb{T}^d\times \mathbb{R}^d} \Phi''( h)|\nabla_x h|^2 \mathrm{d} \mu. \]
Finally, if $h$ is a solution to (\ref{eq:bigeqh}) then 
\[ \frac{\mathrm{d}}{\mathrm{d}t} H_{\Pi}(h(t)) = - \int_{\mathbb{T}^d} \Phi'(\Pi h) \nabla_x \cdot U(x) \mathrm{d}x.  \]
\end{lemma}
\begin{proof}
We can see that the first inequality will follow from
\[ \Phi''(\Pi h) |\nabla_x \Pi h|^2 \leq \Pi \left(\Phi''(h) |\nabla_x h|^2 \right).  \] Since $\Pi$ is integrating against a probability measure we would like to use Jensen's inequality. Instead of looking at $h$ we consider $\mathbf{h} = (\nabla_x h, h)$ we have already shown the function $\phi(\mathbf{x},y) = \Phi''(y)|\mathbf{x}|^2$ is convex so from Jensen's inequality we have
\[ \phi(\Pi \mathbf{h}) \leq \Pi (\phi (\mathbf{h})). \] This implies our desired result since $\Pi$ commutes with $\nabla_x$. (Here $\Pi$ acts component wise on vectors).

Now since we have a $\Phi$-Sobolev inequality for the uniform measure on the torus we have
\[\int_{\mathbb{T}^d} \Phi(\Pi h) \mathrm{d}x \leq C \int_{\mathbb{T}^d} \Phi''(\Pi h) |\nabla_x \Pi h|^2 \mathrm{d}x.  \] We can then conclude this part by the first inequality. 

For the last part,
\begin{align*}
 \partial_t \Pi h = &- \int_{\mathbb{R}^d} v \nabla_x h \mathcal{M}(v) \mathrm{d}v  + \lambda \Pi(\Pi h) - \lambda \Pi h \\
 =& - \nabla_x \cdot U(x).
\end{align*}
 This implies that
\[ \partial_t H_\Pi = \int_{\mathbb{T}^d} \Phi'(\Pi h) \partial_t \Pi h \mathrm{d}x  = - \int_{\mathbb{T}^d} \Phi'(\Pi h) \nabla_x \cdot U(x) \mathrm{d}x.  \]
\end{proof}
We now need a lemma which will help us control the mixed derivative. 
\begin{lemma} \label{phimixedterm}
If $1/\Phi''(t)$ is a concave function then for any positive $\eta$ we have
\begin{align*}
- \int_{\mathbb{R}^d \times \mathbb{T}^d}\Phi''(h) \nabla_x h \cdot \nabla_v h \mathrm{d}\mu \leq & \frac{\eta}{2} \int_{\mathbb{R}^d \times \mathbb{T}^d} \Phi''(h) |\nabla_v h|^2 \mathrm{d} \mu \\& + \frac{1}{2\eta} \left( \int_{\mathbb{R}^d \times \mathbb{T}^d} \Phi''(\Pi h) |\nabla_x \Pi h|^2 \mathrm{d}\mu -  \int_{\mathbb{R}^d \times \mathbb{T}^d} \Phi''(h) |\nabla_x h|^2 \mathrm{d}\mu \right) \\ &- \frac{\mathrm{d}}{\mathrm{d}t}  \int_{\mathbb{R}^d \times \mathbb{T}^d} \Phi(\Pi h) \mathrm{d}\mu.
\end{align*}
\begin{proof}
We need to rewrite the mixed term
\begin{align*}
- \int_{\mathbb{R}^d \times \mathbb{T}^d} \Phi''(h) \nabla_x h \cdot \nabla_v h \mathrm{d}\mu = & - \int_{\mathbb{R}^d \times \mathbb{T}^d} \nabla_x \Phi'(h) \cdot \nabla_v h \mathrm{d}\mu \\
=&  -\int_{\mathbb{R}^d \times \mathbb{T}^d} \left( \nabla_x \Phi'(h) - \nabla_x \Phi'(\Pi h) \right)\cdot \nabla_v h\mathrm{d}\mu \\
& -  \int_{\mathbb{R}^d \times \mathbb{T}^d} \nabla_x \Phi'(\Pi h) \cdot \nabla_v h \mathrm{d}\mu \\
 \leq & \frac{\eta}{2}  \int_{\mathbb{R}^d \times \mathbb{T}^d} \Phi''(h) |\nabla_v h|^2 \mathrm{d}\mu \\
 & + \frac{1}{2\eta}  \int_{\mathbb{R}^d \times \mathbb{T}^d} \frac{|\nabla_x \Phi'(h) - \nabla_x \Phi'(\Pi h)|^2}{\Phi''(h)}\mathrm{d}\mu \\
 & -  \int_{\mathbb{R}^d \times \mathbb{T}^d} \Phi'(\Pi h) \nabla_x \cdot U(x) \mathrm{d}\mu.
\end{align*}
We get the equality for the last term since
\[ -\int \nabla_v h \mathcal{M}(v) \mathrm{d}v = - \int v h \mathcal{M}(v) \mathrm{d}v  = U(x). \] Then we can use the last part of lemma \ref{Jensen}. Now we observe that
\begin{align*}
 \int_{\mathbb{R}^d \times \mathbb{T}^d} \frac{|\nabla_x \Phi'(h) - \nabla_x \Phi'(\Pi h)|^2}{\Phi''(h)}\mathrm{d}\mu = &  \int_{\mathbb{R}^d \times \mathbb{T}^d} \Phi''(h) |\nabla_x h|^2 \mathrm{d}\mu \\
 & - 2  \int_{\mathbb{R}^d \times \mathbb{T}^d} \Phi''(\Pi h)\nabla_x h \cdot \nabla_x \Pi h \mathrm{d}\mu \\
 & +  \int_{\mathbb{R}^d \times \mathbb{T}^d} \frac{\Phi''(\Pi h)^2}{\Phi''(h)}|\nabla_x \Pi h|^2 \mathrm{d}\mu.
\end{align*} Now we see in the second term the only part which depends on $v$ is the $\nabla_x h$ so we can replace it by $\nabla_x \Pi h$. The last term is positive and the only term which depends on $v$ is $1/\Phi''(h)$ since we have that $1/\Phi''(h)$ is a concave function we have
\[ \Pi \left(\frac{1}{\Phi''(h)} \right) \leq \frac{1}{\Phi''(\Pi h)}.   \] Therefore we have that
\begin{align*}
\int_{\mathbb{R}^d \times \mathbb{T}^d} \frac{|\nabla_x \Phi'(h) - \nabla_x \Phi'(\Pi h)|^2}{\Phi''(h)}\mathrm{d}\mu \leq \int_{\mathbb{R}^d \times \mathbb{T}^d} \Phi''(h) |\nabla_x h|^2 \mathrm{d}\mu - \int_{\mathbb{R}^d \times \mathbb{T}^d} \Phi''(\Pi h) |\nabla_x \Pi h|^2 \mathrm{d}\mu.
\end{align*} This completes the proof of our lemma.
\end{proof}
\end{lemma}
Now we can prove the main theorem
\begin{proof}[Proof of Theorem \ref{phitheorem}] 
Using lemmas \ref{phiflowL} and \ref{phiflowT} we get that
\begin{align*}
\frac{\mathrm{d}}{\mathrm{d}t}J_\mu^\Phi(h) \leq & -2b \int_{\mathbb{R}^d \times \mathbb{T}^d} \Phi''(h) |\nabla_x h|^2 \mathrm{d}\mu - c \lambda \int_{\mathbb{R}^d \times \mathbb{T}^d} \Phi''(h) |\nabla_v h|^2 \mathrm{d}\mu \\
& - 2(b \lambda + c)\int_{\mathbb{R}^d \times \mathbb{T}^d} \Phi''(h) \nabla_x h \cdot \nabla_v h \mathrm{d}\mu \\
& + a\lambda \left(\int_{\mathbb{R}^d \times \mathbb{T}^d} \Phi''(\Pi h) |\nabla_x \Pi h|^2\mathrm{d}\mu - \int_{\mathbb{R}^d \times \mathbb{T}^d} \Phi''(h) |\nabla_x h|^2 \mathrm{d}\mu \right).
\end{align*} We now use lemma \ref{phimixedterm} to bound the mixed term.
\begin{align*}
\frac{\mathrm{d}}{\mathrm{d}t}J_\mu^\Phi(h) \leq & - 2b \int_{\mathbb{R}^d \times \mathbb{T}^d} \Phi''(h) |\nabla_x h|^2 \mathrm{d}\mu \\
& - \left(c \lambda - \eta (b\lambda+c) \right) \int_{\mathbb{R}^d \times \mathbb{T}^d} \Phi''(h) |\nabla_v h|^2 \mathrm{d}\mu \\
& - \left(a\lambda - \frac{1}{\eta}(b\lambda+ c) \right)\left(\int_{\mathbb{R}^d \times \mathbb{T}^d} \Phi''(\Pi h) |\nabla_x \Pi h|^2\mathrm{d}\mu - \int_{\mathbb{R}^d \times \mathbb{T}^d} \Phi''(h) |\nabla_x h|^2 \mathrm{d}\mu \right) \\
& - 2(\lambda b+c)\frac{\mathrm{d}}{\mathrm{d}t} \int_{\mathbb{R}^d \times \mathbb{T}^d} \Phi (\Pi h) \mathrm{d} \mu.
\end{align*} Now lets choose $a =2(1+1/\lambda)^2, b=1, c=1, \eta = \lambda/(2(\lambda+1))$. This gives
\begin{align*}
\frac{\mathrm{d}}{\mathrm{d}t}J_\mu^\Phi(h) \leq & - 2 \int_{\mathbb{R}^d \times \mathbb{T}^d} \Phi''(h) |\nabla_x h|^2\mathrm{d}\mu - \frac{\lambda}{2} \int_{\mathbb{R}^d \times \mathbb{T}^d} \Phi''(h) |\nabla_v h|^2 \mathrm{d}\mu \\
& - 2(\lambda+1)\frac{\mathrm{d}}{\mathrm{d}t} \int_{\mathbb{R}^d \times \mathbb{T}^d} \Phi(\Pi h) \mathrm{d}\mu.
\end{align*}
This gives that
\begin{align*}
\frac{\mathrm{d}}{\mathrm{d}t} (J_\mu^\Phi(h) + 2(\lambda+1)H_\mu^\Phi( \Pi h) ) \leq &- \frac{1}{2}\min \left\{ 2, \lambda/2 \right\} \int_{\mathbb{R}^d \times \mathbb{T}^d} \Phi''(h) |\nabla h|^2 \mathrm{d}\mu\\ & - \frac{1}{2}C\min \left\{2, \lambda/2 \right\} \int_{\mathbb{R}^d \times \mathbb{T}^d} \Phi(\Pi h)\mathrm{d}\mu.
\end{align*}
Since we have that $2(1+1/\lambda)^2X^2 + XV +V^2 \geq (X^2+V^2)/2$ this means
\begin{align*} \frac{\mathrm{d}}{\mathrm{d}t} (J_\mu^\Phi(h) + 2(1+\lambda) H_\mu^\Phi( \Pi h) ) \leq &-\min \{2, \lambda/2 \} \left(J_\mu^\Phi(h) - \frac{1}{4(\lambda +1)}C2(\lambda+1)H_\mu^\Phi(\Pi h)\right)  \\
\leq & - \min \left\{1, \frac{C}{4(\lambda +1)}  \right\}\min \{2, \lambda/2\}\left(J_\mu(h) + 4 H_\mu^\Phi(\Pi h) \right).\end{align*} Therefore if we set
\[ \Lambda = \min \left\{1, \frac{C}{4(\lambda+1)}  \right\}\min \{2, \lambda/2 \}, \] we have that
\[ J_\mu^\Phi(h(t)) + 2(1+ \lambda) H_\mu^\Phi(\Pi h(t)) \leq e^{-\Lambda t} \left( J_\mu^\Phi(h(0)) + 2(1+ \lambda) H_\mu^\Phi(\Pi h(0))\right). \]
Now we use that for all $h$,
\[\frac{1}{2}I_\mu^\Phi(h) \leq J_\mu^\Phi(h) \leq 4(1+1/\lambda)^2 I_\mu^\Phi(h).  \] This means that
\begin{align*}
I_\mu^\Phi(h(t)) + H_\mu^\Phi(\Pi h(t)) \leq & 2 \left( J_\mu^\Phi(h(t)) + 2(1+ \lambda) H_\mu^\Phi(\Pi h(t))\right) \\
\leq & 2 e^{-\Lambda t} \left( J_\mu^\Phi(h(0)) + 2(1+ \lambda) H_\mu^\Phi(\Pi h(0))\right) \\
\leq & 2e^{-\Lambda t} \left(4(1+1/\lambda)^2 I_\mu^\Phi(h(0)) + 2(1+ \lambda) H_\mu^\Phi (\Pi h(0)) \right)\\
\leq & 4 \max \left\{2(1+1/\lambda)^2, (1+ \lambda) \right\}e^{-\Lambda t} \left(I_\mu^\Phi (h(0)) + H_\mu^\Phi (\Pi h(0)) \right).
\end{align*}
\end{proof}

\appendix
\section{}

We show for $h$, being bounded above and bellow and having bounded derivatives of all orders is propagated by the equation (this is similar to what is shown in the appendix of \cite{CCG03}). In this set we can do all the calculations given in the main part of the paper. We then show for $h \in W^{1,1}(\mu)$ with finite Fisher information then we can make a density argument to show that the result still holds in this case.

\begin{lemma}
The equation preserves bounded derivatives of all orders.
\end{lemma}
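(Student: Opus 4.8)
The plan is to prove that the evolution $\partial_t h + v\cdot\nabla_x h = \lambda\Pi h - \lambda h$ propagates the property that $h$ is bounded above and below by positive constants and that $\nabla^\alpha h$ is bounded for every multi-index $\alpha$. I would first deal with the maximum/minimum principle: writing the mild (Duhamel) formulation along characteristics $X(s) = x - (t-s)v$, one has
\[ h(t,x,v) = e^{-\lambda t} h_0(x-tv,v) + \lambda \int_0^t e^{-\lambda(t-s)} (\Pi h)(s, x-(t-s)v)\,\mathrm{d}s, \]
and since $\Pi h$ is an average of $h$ against the probability measure $\mathcal{M}(u)\,\mathrm{d}u$, if $m \le h(s,\cdot,\cdot) \le M$ for $s<t$ then $m \le \Pi h(s,\cdot) \le M$, so the right-hand side is a convex combination of values in $[m,M]$; hence $m \le h(t,\cdot,\cdot) \le M$. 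With the initial bounds $m = \inf h_0 > 0$, $M = \sup h_0 < \infty$ this gives the two-sided bound for all time. This also shows existence/uniqueness in $L^\infty$ by a standard contraction-mapping argument on short time intervals, iterated.

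Next I would treat the derivatives. The key structural point is that $T = -v\cdot\nabla_x$ does not commute with $\nabla_x$ trivially but the commutators are simple: $[\partial_{x_i}, v\cdot\nabla_x] = 0$ and $[\partial_{v_i}, v\cdot\nabla_x] = \partial_{x_i}$. Also $\Pi$ commutes with $\nabla_x$ and annihilates $\nabla_v$ (more precisely $\nabla_v \Pi h = 0$ and $\nabla_x \Pi h = \Pi \nabla_x h$). So differentiating the equation, $g = \partial_{x_i} h$ solves the same equation, $\partial_t g + v\cdot\nabla_x g = \lambda\Pi g - \lambda g$, and the Duhamel bound above applies verbatim to $g$: $\|\partial_{x_i} h(t)\|_\infty \le e^{-\lambda t}\|\partial_{x_i} h_0\|_\infty + \lambda\int_0^t e^{-\lambda(t-s)}\|\partial_{x_i} h(s)\|_\infty\,\mathrm{d}s$, which by Grönwall gives $\|\partial_{x_i} h(t)\|_\infty \le \|\partial_{x_i} h_0\|_\infty$. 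For $w = \partial_{v_i} h$ one gets an extra source term: $\partial_t w + v\cdot\nabla_x w = \lambda\Pi w - \lambda w - \partial_{x_i} h$ (using $\nabla_v\Pi h = 0$ and the commutator), so $w$ satisfies an inhomogeneous version; since $\partial_{x_i} h$ is already controlled, Duhamel plus Grönwall again yields a bound of the form $\|\partial_{v_i} h(t)\|_\infty \le (\|\partial_{v_i} h_0\|_\infty + t\sup_{s\le t}\|\partial_{x_i} h(s)\|_\infty)$, growing at worst polynomially in $t$. Higher-order derivatives are handled by induction on $|\alpha|$: each time one differentiates, the only new terms are (i) copies of the same transport–relaxation operator acting on the derivative and (ii) lower-order derivatives arising from the $[\partial_{v_i}, T] = \partial_{x_i}$ commutator, which by the inductive hypothesis are already bounded on every finite time interval; the $\Pi$ term is harmless because $\|\Pi \cdot\|_\infty \le \|\cdot\|_\infty$.

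The main obstacle is purely bookkeeping rather than conceptual: one must be careful that the polynomial-in-$t$ growth coming from the $v$-derivative commutators is genuinely only polynomial (it is, since at each order only finitely many lower-order terms feed in), and one must justify the Duhamel representation and differentiation under the integral, which is legitimate because the characteristics $(x,v)\mapsto(x-tv,v)$ are smooth measure-preserving diffeomorphisms of $\mathbb{T}^d\times\mathbb{R}^d$ and $\Pi$ is a bounded smoothing-in-$v$ operator. A minor point worth stating is that $\mathbb{R}^d$ in the velocity variable is noncompact, but since we only claim $L^\infty$ bounds (not weighted ones) and $\Pi$ integrates against the fixed Gaussian $\mathcal{M}$, no issue arises; the weighted bounds needed to make $I(h_0)$, $H(h_0)$ finite are part of the separate density argument and not of this lemma. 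Once the $L^\infty$ bounds on all derivatives are in hand, every integration by parts and differentiation under the integral sign used in Sections 2–4 is justified for such $h$, and the final density argument extends the convergence statements to general $h_0 \in W^{1,1}(\mu)$ with finite Fisher information by approximation.
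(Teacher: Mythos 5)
Your proposal is correct and takes essentially the same route as the paper: the mild/Duhamel formulation along characteristics plus Gr\"{o}nwall gives a closed $L^\infty$ bound on the pure $x$-derivatives $D^\alpha_x h$, and the $v$-derivatives are then controlled because the commutator $[\partial_{v_i}, v\cdot\nabla_x]=\partial_{x_i}$ (equivalently, differentiating the mild formulation in $v$) only feeds in already-bounded $x$-derivatives and derivatives of the initial data, with at worst polynomial-in-$t$ growth. One minor slip that does not affect the argument: since $\Pi h$ is independent of $v$, the equation for $w=\partial_{v_i}h$ is $\partial_t w + v\cdot\nabla_x w = -\lambda w - \partial_{x_i}h$ with no $\lambda\Pi w$ term (your own observation $\nabla_v \Pi h = 0$ already says this), which only simplifies your Duhamel estimate.
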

\begin{proof}

We rewrite the equation for $h$ in a mild formulation as follows
\[ e^{\lambda t} h(t,x,v) = h(0, x-vt, v) + \lambda \int_0^t e^{\lambda s} \int h(s, x-v(t-s), u) \mathcal{M}(u) \mathrm{d}u \mathrm{d}s. \]
This leads to the following inequality
\[ e^{\lambda t} \| D^{\alpha}_x h(t) \|_{\infty} \leq \| D^{\alpha}_x h(0) \| + \lambda \int_0^t e^{\lambda s} \| D^{\alpha}_x h(s) \|_{\infty} \mathrm{d}s. \]
 Therefore by Gronwall's inequality  we have that
\[ \| D^{\alpha}_x h(t) \|_{\infty} \leq \|D_x^{\alpha} h(0)\|_{\infty}. \] We also from this mild formulation that any mixed derivative can be written in terms of $x$ derivative and derivatives of the initial data. Therefore, the derivatives will remain in $L^\infty$ for all time.
\end{proof}

\begin{lemma}
The equation preserves positivity and constants are a steady state of the equation therefore being bounded above and below is preserved.
\end{lemma}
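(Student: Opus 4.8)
The statement to prove is: \emph{The equation preserves positivity and constants are a steady state of the equation, therefore being bounded above and below is preserved.}

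The plan is to establish the two assertions separately and then combine them by linearity. First I would show that constants are steady states: if $h \equiv c$ is constant, then $v\cdot\nabla_x h = 0$, and $\Pi h = \int_{\mathbb{R}^d} c\, \mathcal{M}(u)\,\mathrm{d}u = c$ since $\mathcal{M}$ is a probability density, so $\lambda\Pi h - \lambda h = \lambda c - \lambda c = 0$; hence $\partial_t h = 0$ and $h(t) \equiv c$ solves \eqref{eq:bigeqh}. This is immediate and needs no real work.

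The substantive part is propagation of positivity, which I would read off the same mild (Duhamel) formulation used in the previous lemma, namely
\[ e^{\lambda t} h(t,x,v) = h(0,x-vt,v) + \lambda \int_0^t e^{\lambda s} \int_{\mathbb{R}^d} h(s, x-v(t-s), u)\,\mathcal{M}(u)\,\mathrm{d}u\,\mathrm{d}s. \]
If $h(0,\cdot,\cdot) \geq 0$, I would argue by a Gr\"onwall / continuity-in-time bootstrap (or a fixed-point iteration defining $h$ as the limit of the Picard iterates $h^{(n+1)}$ obtained by plugging $h^{(n)}$ into the right-hand side): the free-transport term $h(0,x-vt,v)$ is nonnegative, and if $h(s,\cdot,\cdot)\geq 0$ for $s\leq t$ then the integral term is an integral of nonnegative quantities against the positive measure $\mathcal{M}(u)\,\mathrm{d}u$ and $\mathrm{d}s$, hence nonnegative; since $e^{\lambda t}>0$ this gives $h(t,\cdot,\cdot)\geq 0$. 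Starting the iteration from $h^{(0)}\equiv 0$ (or from the free-transport part) keeps every iterate nonnegative, and nonnegativity passes to the uniform limit.

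Finally I would combine the two facts. Suppose $m \leq h(0,x,v) \leq M$. Since $h\equiv m$ and $h\equiv M$ are steady states and the equation \eqref{eq:bigeqh} is linear, $h(t) - m$ solves the same equation with nonnegative initial data $h(0)-m \geq 0$, so by the positivity statement $h(t) - m \geq 0$, i.e. $h(t)\geq m$; likewise $M - h(t)$ solves the equation with nonnegative initial data, so $h(t)\leq M$. Hence $m\leq h(t,x,v)\leq M$ for all $t$, which is the claim. The only point requiring any care is making the Duhamel/iteration argument for positivity rigorous (well-posedness of the mild formulation and justification of the limit), but this is exactly the same framework already set up in the preceding lemma, so it can be quoted rather than redone.
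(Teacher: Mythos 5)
Your proposal is correct and follows essentially the same route as the paper: constants are steady states because $\Pi c = c$, positivity is propagated through the exponentially weighted Duhamel/characteristics representation in which the gain term $\lambda \int h\,\mathcal{M}(u)\,\mathrm{d}u$ is manifestly nonnegative, and the two-sided bounds follow by applying positivity to $h-m$ and $M-h$ via linearity. The only cosmetic difference is that you make the positivity step rigorous by Picard iteration on the mild form, whereas the paper argues directly from the sign of $\partial_t\bigl(e^{\lambda t}h(t,x+vt,v)\bigr)$; these are interchangeable.
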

\begin{proof}
We can show that
\[ \partial_t \left( e^{\lambda t}h(t, x+vt,v) \right) = \int \lambda e^{\lambda t} h(t, x+vt, u) \mathcal{M}(u) \mathrm{d}u. \] Therefore if $e^{\lambda t}h(t,x+vt,v)$ is positive for all $x$ and $v$ then so is its derivative. Therefore it will remain positive for all time.

 It is easy to check that constants are a steady state so if $h(0)-c$ is positive then since positivity is preserved so is $h(t)-c$ and similarly if $C-h(0)$ is positive then so is $C-h(t)$.
\end{proof}


\begin{lemma} Suppose that we have $h(0)$ is in $W^{1,1}(\mu)$ with bounded Fisher information, and also suppose we have a sequence $h_n(0)$ which is bounded above and below, has bounded derivatives up to second order and converges to $h(0)$ in $L^1(\mu)$ with 
\[ H^{\Phi}(h_n(t)) \leq Ae^{-\Lambda t}I^{\Phi}(h_n(0)), \] for every $n$ then we have
\[ H(h(t)) \leq Ae^{-\Lambda t} I(h(0)). \]
\end{lemma}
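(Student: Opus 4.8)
The plan is to run a stability-plus-lower-semicontinuity argument: first transfer the $L^1(\mu)$ convergence from time $0$ to an arbitrary time $t$, then use that the relative entropy $H$ is lower semicontinuous along this convergence, and finally pass to the limit in the bound assumed for the $h_n$.

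First I would record that the flow $S_t$ of $(\ref{eq:bigeqh})$ is a contraction on $L^1(\mu)$. Indeed, by the two preceding lemmas together with the conservation of $\int h\,\mathrm{d}\mu$ noted in the introduction, $S_t$ is positivity preserving and satisfies $\int S_t g\,\mathrm{d}\mu=\int g\,\mathrm{d}\mu$ for every $g$; for a signed $g$ this gives $|S_t g|=|S_t g_+ - S_t g_-|\le S_t g_+ + S_t g_- = S_t|g|$, hence $\|S_t g\|_{L^1(\mu)}\le\|g\|_{L^1(\mu)}$. Applying this to $g=h_n(0)-h(0)$ and using that the equation is linear, $\|h_n(t)-h(t)\|_{L^1(\mu)}\le\|h_n(0)-h(0)\|_{L^1(\mu)}\to 0$, so for each fixed $t\ge 0$ we have $h_n(t)\to h(t)$ in $L^1(\mu)$. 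Passing to a subsequence we may assume the convergence holds $\mu$-a.e., so that $h(t)\ge 0$ a.e. (positivity being preserved for each $h_n$) and, by the mass identity, $\int h(t)\,\mathrm{d}\mu=1$.

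Next I would invoke lower semicontinuity of $H$. Since $u\mapsto u\log u$ is bounded below by $-1/e$ on $[0,\infty)$ and $\mu$ is a probability measure, Fatou's lemma applied to the nonnegative functions $h_{n_k}(t)\log h_{n_k}(t)+1/e$ along an a.e.\ convergent subsequence gives $H(h(t))\le\liminf_k H(h_{n_k}(t))$, and the standard subsequence-of-a-subsequence argument upgrades this to $H(h(t))\le\liminf_n H(h_n(t))$. Combining this with the hypothesis and with the fact that the natural approximating sequence can be taken so that $I(h_n(0))\to I(h(0))$ (for instance by mollifying $h(0)$ and truncating it from above and below, which preserves the ``good properties'' while making the Fisher information converge), we obtain
\[
H(h(t))\le\liminf_n H(h_n(t))\le\liminf_n A e^{-\Lambda t} I(h_n(0)) = A e^{-\Lambda t} I(h(0)),
\]
as required. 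The delicate point is exactly this last one: the Fisher information is itself only lower semicontinuous along $L^1$ convergence, which is the wrong direction, so the argument really hinges on choosing the approximating sequence so that its Fisher information \emph{converges} rather than merely stays bounded; this is where the explicit construction of the $h_n$ enters, whereas the $L^1$-contraction used in the first step is a soft consequence of the positivity and mass preservation established in the two preceding lemmas.
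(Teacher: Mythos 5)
Your overall architecture matches the paper's proof: transfer the $L^1(\mu)$ convergence from time $0$ to time $t$ via an $L^1$ stability estimate for the flow, extract an a.e.\ convergent subsequence, use Fatou to get lower semicontinuity of $H$, and then pass to the limit in the hypothesis. Your contraction argument (positivity preservation plus mass conservation giving $|S_tg|\le S_t|g|$, hence $\|S_tg\|_{L^1(\mu)}\le\|g\|_{L^1(\mu)}$) is a perfectly valid, arguably cleaner, substitute for the paper's route, which instead rearranges a Duhamel-type bound $\sup_{s\le t}\|h_1(s)-h_2(s)\|_{L^1(\mu)}\le e^{-\lambda t}\|h_1(0)-h_2(0)\|_{L^1(\mu)}+(1-e^{-\lambda t})\sup_{s\le t}\|h_1(s)-h_2(s)\|_{L^1(\mu)}$. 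The Fatou step (you use $u\log u+1/e\ge0$, the paper uses $h\log h-h+1\ge0$) is the same idea.

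The gap is exactly at the point you yourself flag as delicate. You correctly observe that lower semicontinuity of $I$ goes the wrong way, so one must \emph{construct} an approximating sequence with $\liminf_n I(h_n(0))\le I(h(0))$, but you then dispose of this with one sentence (``mollify and truncate from above and below''), and that recipe, taken literally, does not obviously work and is in any case not verified. The difficulties it glosses over are precisely what occupies roughly half of the paper's proof: the domain is $\mathbb{T}^d\times\mathbb{R}^d$ with Gaussian weight in $v$, so mollification in $v$ does not interact innocently with $\mu$ unless one first cuts off in velocity -- the paper uses $\chi_R(v)=\chi(\|v\|/R)$ with $|\chi'|^2/\chi$ integrable, and needs the explicit expansion of $|\nabla h_R|^2/h_R$ into three terms to show $I(h_R)\to I(h)$; the mollification step then controls $I$ via Jensen's inequality for the jointly convex function $|\mathbf{x}|^2/y$ (giving $|\nabla h_{\epsilon,R}|^2/h_{\epsilon,R}\le\phi_\epsilon\star(|\nabla h_R|^2/h_R)$); and boundedness below together with mass one is restored by adding $\eta$ and renormalizing, with monotone convergence handling $\eta\to0$. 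Truncating $h$ in value, as you propose, does not produce the compact support in $v$ that makes the mollification estimate against the Gaussian weight go through, nor does it by itself yield the ``good properties'' (bounded derivatives of all orders, bounds above and below, unit mass) in the order needed, so the claim $I(h_n(0))\to I(h(0))$ remains unproved in your write-up. To complete the argument you would need to carry out a construction of this type and verify the behaviour of the Fisher information through each regularization step, as the paper does.
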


\begin{proof}
Convergence in $L^1$ implies that $h_n$ tends to $h$ a.e. along a subsequence. Also, suppose that $h_1$ and $h_2$ are two solutions to the equation then
\[ \sup_{s \leq t}\|h_1(s) -h_2(s)\|_{L^1(\mu)} \leq e^{-\lambda t}\|h_1(0)-h_2(0)\|_{L^1(\mu)} + \sup_{s \leq t}\|h_1(s) - h_2(s)\|_{L^1(\mu)}(1-e^{-\lambda t}).  \] Therefore,
\[\sup_{s \leq t} \|h_1(s) - h_2(s) \|_{L^1(\mu)} \leq \| h_1(0) - h_2(0)\|_{L^1(\mu)},\] hence $h_n(t)$ tends to $h(t)$ in $L^1$ therefore $h_n(t)$ also converges to $h(t)$ almost everywhere along a subsequence.

Then since $\Phi(h_n(t)) \geq 0$ by Fatou's lemma we have
\[ \int\Phi(h(t))\mathrm{d}\mu \leq \liminf_n \int\Phi(h_n(t)) \mathrm{d}\mu. \]
Therefore, if we have $h$ a solution to the equation with initial data $h(0)$ as defined above we have that
\[ H^\Phi (h(t)) \leq \liminf_n Ae^{-\Lambda t} I^\Phi (h_n(0)). \] So to prove our theorem holds in this larger set it remains to show that we can find a sequence $h_n(0)$ converging to $h(0)$ in $L^1(\mu)$ where for every $n$ $h_n(0)$ is positive, integrates to 1 against $\mu$, is bounded bellow and has derivatives bounded of all orders which also satisfies
\[ \liminf_n I^\Phi (h_n(0)) \leq I^\Phi (h(0)). \] 

To do this we make a very standard molifier argument. Let $\chi$ be a smooth function on $\mathbb{R}_+$ with $\chi(x) =1$ for $x < 1$ and $\chi(x) =0$ for $x > 2$ and $\Phi''(\chi(x))|\chi'(x)|^2$ integrable. Then define $\chi_R(x,v) = \chi(\|v\|/R).$ Also let $\phi$ be a molifier integrating to one and compactly supported in $B(0,1)$ then set $\phi_\epsilon (x,v) = \epsilon^{-2d} \phi((x,v)/\epsilon).$ Take some $h$ in $W^{1,1}(\mu)$ with finite $\Phi$-Fisher information. Let $h_R = h \chi_R$, then set $h_{\epsilon, R} = \phi_\epsilon \star h_R$ and then $h_{\eta, \epsilon, R} = (h_{\epsilon, R} + \eta)/(\|h_{\epsilon, R}\|_1 + \eta)$. So $h_{\eta, \epsilon, R}$ is bounded below and has derivatives bounded of all orders and fairly clearly converges to $h$ in $L^1(\mu)$.

So first we try and get rid of $\eta$ since $\nabla h_{\eta, \epsilon, R} = \nabla h_{\epsilon,R}/(\|h_{\epsilon, R}\|_1 + \eta)$ we get that
\[ \Phi''(h_{\eta, \epsilon, R}) |\nabla h_{\eta, \epsilon, R}|^2 \hspace{5pt}\mbox{increases to}\hspace{5pt} \Phi''(h_{\epsilon, R})|\nabla h_{\epsilon, R} |^2. \] Therefore, by monotone convergence,
\[ \lim_{\eta \rightarrow 0} I^\Phi(h_{\eta, \epsilon, R}) = I^\Phi(h_{\epsilon, R}). \] 

Now we work on $\epsilon$, we have that $\nabla h_{\epsilon, R} = \phi_{\epsilon} \star \nabla h_R$. We can now make a similar argument based on Jensen's inequality and the fact that $\Phi''(y)|x|^2$ is convex to get that
\[ \Phi''(h_{\epsilon, R})|\nabla h_{\epsilon, R}|^2 \leq \phi_{\epsilon} \star \left( \Phi''(h_R)|\nabla h_R|^2 \right). \] Since, the mollification of and $L^1$ function converges in $L^1$ to that function we get that
\[ \lim_{\epsilon \rightarrow 0} I^\Phi(h_{\epsilon, R}) \leq I^\Phi(h_R). \]

Now we work on $R$, we note that 
\[ \Phi''(h_R)|\nabla h_R|^2 = \Phi''(h\chi_R)\left( \chi_R^2|\nabla h|^2 + \chi_R h \nabla h \cdot \nabla \chi_R +h^2 |\nabla \chi_R|^2\right) \] Since, $h, \nabla h, \Phi''(h)|\nabla h|^2$ are all in $L^1(\mu)$ we can see that
\[ \lim_{R \rightarrow \infty} I^\Phi(h_R) = I^\Phi(h). \]
\end{proof}
\bibliographystyle{abbrv}
\bibliography{kacbibliography.tex}{}
\end{document}